\newcommand{\Real}{\mathbb R}
\newcommand{\p}{\mathcal{P}}
\newcommand{\q}{\mathcal{Q}}
\newcommand{\cv}{\mathcal{V}}
\newcommand{\W}{\mathcal{W}}
\newcommand{\po}{(\mathcal{P}, \mathcal{V})}
\newenvironment{proof}{\noindent\textbf{Proof.}}{\hfill$\Box$}
\newtheorem{thm}{Theorem}[section]
\newtheorem{prop}[thm]{Proposition}
\newtheorem{rem}[thm]{Remark}
\newtheorem{defn}[thm]{Definition}
\newtheorem{lem}[thm]{Lemma}
\begin{document}

\begin{frontmatter}



\title{A note on barreledness in locally convex cones}


\author{Amir Dastouri} 
\ead{a.dastouri@tabrizu.ac.ir}
\author{Asghar Ranjbari\corref{cor1}}
\ead{ranjbari@tabrizu.ac.ir}
\cortext[cor1]{Corresponding author.}
\address{Department of Pure Mathematics, Faculty of Mathematical Sciences,  University of Tabriz, Tabriz, Iran\fnref{label1}
}

\begin{abstract}
Locally convex cones are generalization  of locally convex spaces. The assertion, whether
 a barreled cone is an upper-barreled cone or not, was posed as a question in [A. Ranjbari, H. Saiflu, Projective and inductive limits in locally convex cones, J. Math. Anal. Appl. 332 (2)
(2007) 1097-1108]. In this paper, we show  that a barreled locally convex cone is not necessarily  upper-barreled.
\end{abstract}



\begin{keyword}
Locally convex cone \sep Barrel\sep Barreled cone \sep Upper-barreled cone.


\MSC[2010] 
46A03 \sep  46A08
\end{keyword}

\end{frontmatter}


\section{Introduction}
\label{}
A \textit{cone}  is defined to be a commutative monoid $\p$ together with a scalar multiplication
 by nonnegative real numbers satisfying the same axioms as for vector spaces; that is, $\p$ is endowed with an addition $(x,y)\mapsto x+y: \p\times\p \to \p$
 which is associative, commutative and admits a neutral element $0$, i.e. it satisfies:
 $$x+(y+z)=(x+y)+z,$$
 $$x+y=y+x,$$
 $$x+0=x,$$
 for all $x,y,z\in\p$, and with a scalar multiplication $(r,x)\mapsto r\cdot x:\mathbb{R}_+\times\p\to\p$ satisfying for all $x,y\in\p$ and $r,s\in\mathbb{R}_+$:
$$r\cdot(x+y)=r\cdot x+r\cdot y,$$
$$(r+s)\cdot x=r\cdot x+s\cdot x,$$
$$(rs)\cdot x=r\cdot(s\cdot x),$$
$$1\cdot x=x,$$
$$0\cdot x=0.$$
The  theory of locally convex cones  developed in
\cite{kero1992} uses an order theoretical concept or a convex quasi-uniform structure on a  cone. In this paper, we use the former. We  review  some of the
main concepts. For more details refer \cite{kero1992} or \cite{ro2009}, and
for some recent researches see \cite{
ayra2016,ayralattice,ayra2014,ropositivity}.

  A \emph{preordered cone} (\emph{ordered cone}) is a cone $\p$ endowed with a  preorder (reflexive transitive relation) $\leq$
such that addition and multiplication by fixed scalars $r\in\mathbb{R}_+$ are order preserving, that is
$x\le y$ implies $x+z\le y+z$ and $r\cdot x\le r\cdot y$ for all
$x,y,z\in\p$ and $r\in\mathbb{R}_+.$ Every ordered vector space is an ordered cone. The cones $\bar{\Real}=\Real\cup\{+\infty\}$ and $\bar{\Real}_+=\Real_+\cup\{+\infty\}$, with the usual order and
algebraic operations  (especially $0\cdot(+\infty)=0$),   are ordered cones that are not embeddable in vector spaces.

Let  $\p $ be a preordered cone.
A subset  $\cv$ of  $\p $  is called an
\emph{(abstract) 0-neighborhood system}, if $\cv$ is a subcone of  $\p $  without zero directed towards $0$, i.e.:  (i) $0<v$ for all $v\in\cv$; (ii) for all $u,v\in\cv$ there is a  $w\in\cv$ with $w\leq u$ and $w\leq
	v$; (iii) $u+v\in\cv$ and $\alpha v\in\cv$ whenever $u,v\in\cv$ and
	$\alpha>0 $.

Let $a\in\p $  and $v\in\cv$. We define
$v(a)=\{b\in\p \ | \ b\leq a+v\}$,  resp.   $(a)v=\{b\in\p \ | \ a\leq
b+v\},$ to be a neighborhood of $a$ in the  \emph{upper}, resp.
\emph{lower} topologies on $\p$.   The  common refinement of the upper and lower topologies is called
\emph{symmetric} topology. We denote the neighborhoods of $a$ in the
symmetric topology by $v(a)\cap(a)v$ or $v(a)v$.
We call that $(\p,\cv)$ is a
\emph{full locally convex cone} if the elements of $\p$ to be \emph{bounded below}, i.e. for every $a\in\p$ and
$v\in\cv$ we have $0\leq a+\rho v$ for some $\rho>0$.  Each subcone of $\p$, not
necessarily containing $\cv$, is called a \emph{locally convex
	cone}.

For cones $\p$ and $\q$, a mapping $t:\p\to\q$ is
called a \emph{linear operator } if $t(a+b)=t(a)+t(b)$ and $t(\alpha
a)=\alpha t(a)$ hold for $a,b\in\p $ and $\alpha\geq 0$.

A \emph{linear functional} on a cone $\p$ is a linear mapping
$\mu:\p\to\bar{\Real}$.

Let $\po$ and $(\q,\W)$ be two locally convex cones. The linear operator $t:\po\to (\q,\W)$ is called (uniformly) continuous or simply continuous if for every $w\in\W$ one can find a $v\in\cv$ such that $a\le b+v$ implies $t(a)\le t(b)+w$.
It is easy to see that the (uniform)
continuity  implies continuity with respect to the upper, lower and
symmetric topologies on $\p$ and $\q$.

According to the  definition of (uniform) continuity,  a linear functional $\mu$ on $\po$ is (uniformly) continuous if there is a $v\in\cv$ such that
$a\le b+v$ implies $\mu(a)\le\mu(b)+1$.
The continuous linear functionals on a
locally convex cone $(\p,\cv)$  (into $\bar{\Real} $) form a
cone with the usual addition and scalar multiplication of functions.
This cone is called the \emph{dual cone} of $\p$ and denoted by
$\p^*$.

For a locally convex cone $(\p,\cv)$, the  polar $v^{\circ}$  of
$v\in\cv$ consists of all linear functionals $\mu$ on $\p$
satisfying $\mu(a)\leq\mu(b)+1$ whenever $a\leq b+v$ for $a,b\in\p$.
We have $\cup\{v^{\circ}: v\in\cv\}=\p^*$.
The cone $\bar{\Real}_{+}=\{a\in \bar{\Real}:a\geq0 \}$ with (abstract) 0-neighborhood $\cv=\{\varepsilon>0:\varepsilon\in \Real\}$ is a
 locally convex cone. The dual cone of  $\bar{\Real}_{+}$ under
$\cv$ consists of all nonnegative  reals and the functional $\bar{0}$ such that $\bar{0}(a)=0$ for
 all $a\in\Real$ and $\bar{0}(+\infty)=+\infty$.

\section{Some results on barreledness}

Barreledness plays an important role in Functional Analysis to establish some important theorems as Uniform Boundedness  and Open Mapping theorems.
A barrel and a barreled cone were  defined in \cite{ro1998}  for verifying the Uniform Boundedness Theorem in locally convex cones.
In this section, we prove some results  about barreledness  which we will need next.

\begin{defn}[\cite{ro1998}]\label{d1}
	Let $\po$ be a locally convex cone. A \emph{barrel} is a convex
	subset $B$ of $\p^2$ with the following properties:
	\begin{enumerate}
		\item[($B1$)] For every $b\in \p$ one can find a $v\in\cv$  such that for
		every $a\in v(b)v$ there is a $\lambda>0$ such that $(a,b)\in
		\lambda B$.
		\item[($B2$)] For all $a,b\in\p$ such that $(a,b)\notin B$ there is a
		$\mu\in\p^*$ such that $\mu(a)>\mu(b)+1$ and $\mu(c)\leq\mu(d)+1$ for all $(c,d)\in B$.
	\end{enumerate}
\end{defn}

A locally convex cone $\p$ is said to be \emph{barreled} if for
every barrel $B\subseteq\p^2$  and every $b\in \p$ there is a
$v\in\cv$ and a  $\lambda>0$ such that $(a,b)\in\lambda B$ for all
$a\in v(b)v$ (see \cite{ro1998}).

An upper-barreled cone defined in \cite{rasa2007} for verifying inductive and projective limits in
locally convex cone:

\begin{defn}[\cite{rasa2007}]
Let $\po$  be a locally convex cone. The cone $\p$ is called
{\it upper-barreled} if for every barrel $B\subseteq \p^2$, there is
 $v\in\cv$ such that $\tilde{v}\subseteq B$, where
 $$\tilde{v}=\{(a,b)\in \p\times\p \ : \  a\leq b+v\}.$$
\end{defn}

 In \cite{ra2011strict} it was proved that under some conditions, the strict inductive
 limit of barreled locally convex cones is upper-barreled. As mentioned in  \cite{rasa2007}, Example~4.7, a full locally convex cone is upper barreled. Also the cone $\mathcal{C}=\{0,\infty\}$ is upper-barreled with each (abstract) 0-neighborhood system.  An upper-barreled cone is barreled. But
the question whether  a barreled cone is
upper-barreled or not, was posed as a question in \cite{rasa2007} (page 1107). Now, we show that it is not true. First we prove some general results.

\begin{lem}\label{lm2.1}
Let $\po$ be a locally convex cone and $B$ be a barrel in $\p$. If $(a,b)\in \lambda_a B$, $(c,b)\in \lambda_c B$ and $a\le c$ , then $(a,b)\in \lambda_c B$.
\end{lem}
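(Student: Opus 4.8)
The plan is to use the defining properties of a barrel, specifically the separation property $(B2)$, to transfer the membership $(a,b)\in\lambda_c B$ from the hypothesis $(c,b)\in\lambda_c B$ along the order relation $a\le c$. First I would argue by contradiction: suppose $(a,b)\notin\lambda_c B$. Since $\lambda_c B$ is again a convex set (a positive scalar multiple of the convex set $B$) and scaling interacts well with the barrel axioms — note that $(B2)$ applied to $B$ yields a corresponding separation statement for $\lambda_c B$, because $\mu(c)\le\mu(d)+1$ for all $(c,d)\in B$ is equivalent to $\mu(c)\le\mu(d)+\lambda_c$ for all $(c,d)\in\lambda_c B$, after rescaling $\mu$ by $1/\lambda_c$ — I would extract a functional $\mu\in\p^*$ with $\mu(a)>\mu(b)+\lambda_c$ while $\mu(p)\le\mu(q)+\lambda_c$ for all $(p,q)\in\lambda_c B$.

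Next I would feed the hypotheses $(a,b)\in\lambda_a B$ and $(c,b)\in\lambda_c B$ into this separating functional. From $(c,b)\in\lambda_c B$ we immediately get $\mu(c)\le\mu(b)+\lambda_c$. On the other hand, $a\le c$ together with the fact that $\mu$ is a linear functional into $\bar\Real$ — and in particular monotone, since continuity of $\mu$ forces $\mu(a)\le\mu(c)$ whenever $a\le c$ (this follows from the defining inequality for continuous functionals, applied with the neighborhood scaled arbitrarily small) — gives $\mu(a)\le\mu(c)$. Chaining these, $\mu(a)\le\mu(c)\le\mu(b)+\lambda_c$, which directly contradicts $\mu(a)>\mu(b)+\lambda_c$. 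Hence $(a,b)\in\lambda_c B$, as desired.

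The main obstacle I anticipate is the correct handling of the scaling in property $(B2)$ and making sure the separating functional really does carry the factor $\lambda_c$ in the right place: one must verify that membership in $\lambda_c B$ is separated by a $\mu$ satisfying $\mu(a)>\mu(b)+\lambda_c$ rather than $\mu(a)>\mu(b)+1$, which requires rescaling the functional obtained from $(B2)$ for $B$ itself and checking that the rescaled object is still in $\p^*$ (it is, since $\p^*$ is a cone closed under multiplication by nonnegative reals). A secondary, more delicate point is the monotonicity of $\mu$: one should justify that every continuous linear functional on a locally convex cone is order-preserving with respect to the cone's preorder; this is where the definition of continuity for linear functionals, namely the existence of $v\in\cv$ with $a\le b+v\Rightarrow\mu(a)\le\mu(b)+1$ together with the fact that $\cv$ is closed under multiplication by arbitrarily small positive scalars, is essential. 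Everything else is routine bookkeeping with the cone axioms.
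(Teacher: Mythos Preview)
Your proposal is correct and follows the same route as the paper: argue by contradiction via (B2), then let monotonicity of $\mu$ from $a\le c$ clash with the inequality coming from $(c,b)\in\lambda_c B$. The paper sidesteps the rescaling issue you flag as your main obstacle by simply dividing by $\lambda_c$ at the outset and applying (B2) directly to $(\tfrac{a}{\lambda_c},\tfrac{b}{\lambda_c})\notin B$, so the constant on the right stays $+1$ throughout and no rescaling of the functional is needed.
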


\begin{proof}
By the hypothesis, we have
$$(\frac{a}{\lambda_a},\frac{b}{\lambda_a}),(\frac{c}{\lambda_c},\frac{b}{\lambda_c})\in B.$$
Suppose $a\le  c$ and  $(\frac{a}{\lambda_c},\frac{b}{\lambda_c})\notin B$. By (B2) of Definition~\ref{d1}, there exits $\mu\in \p^*$ such that $\mu (\frac{c}{{\lambda }_{c}})\le \mu (\frac{b}{{\lambda }_{c}})+1 $ and  $\mu (\frac{a}{{\lambda }_{c}})>\mu (\frac{b}{{\lambda }_{c}})+1$. Since $a\le c$, so $\mu(\frac{a}{\lambda_{c}})\leq\mu(\frac{c}{\lambda_{c}})$ and then $\mu (\frac{c}{{\lambda }_{c}})>\mu (\frac{b}{{\lambda }_{c}})+1$. This contradiction shows that $ (\frac{a}{\lambda_c},\frac{b}{\lambda_c} )\in B$.

\end{proof}

\begin{thm}\label{th2.1}
Let $\po$ be a locally convex cone. If  for each $b\in P$ and $v\in V$ there exits $c\in v(b)v$ such that  $a\le c$ for all  $a\in v(b)v$, then $\po$ is barreled.
\end{thm}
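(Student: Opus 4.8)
The plan is to verify the two defining properties of a barrel, (B1) and (B2), translate into a condition on neighborhoods, and then exploit the hypothesis that each symmetric neighborhood $v(b)v$ contains a ``largest'' element $c$. So let $B\subseteq\p^2$ be an arbitrary barrel and fix $b\in\p$. By property (B1) of Definition~\ref{d1}, there is a $v\in\cv$ such that for every $a\in v(b)v$ there exists $\lambda_a>0$ with $(a,b)\in\lambda_a B$; the goal is to produce a single $\lambda>0$ (and a single neighborhood, for which $v$ itself will do) that works simultaneously for all $a\in v(b)v$.

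First I would invoke the hypothesis with this $b$ and this $v$: there is a $c\in v(b)v$ such that $a\le c$ for all $a\in v(b)v$. Applying (B1) to this particular $c$ gives $\lambda_c>0$ with $(c,b)\in\lambda_c B$. Now I would fix an arbitrary $a\in v(b)v$; by (B1) we also have $(a,b)\in\lambda_a B$ for some $\lambda_a>0$, and since $a\le c$, Lemma~\ref{lm2.1} applies and yields $(a,b)\in\lambda_c B$. The point is that $\lambda_c$ does not depend on $a$, so setting $\lambda:=\lambda_c$ we obtain $(a,b)\in\lambda B$ for every $a\in v(b)v$, which is exactly the condition defining a barreled cone. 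Since $B$ and $b$ were arbitrary, $\po$ is barreled.

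The main thing to check carefully — and where I expect the only real subtlety to lie — is the applicability of Lemma~\ref{lm2.1}: it requires both $(a,b)\in\lambda_a B$ and $(c,b)\in\lambda_c B$ together with $a\le c$, and all three of these are now in hand (the first two from (B1), the third from the hypothesis). One should also note that property (B2) of the barrel is used only implicitly, through Lemma~\ref{lm2.1}, whose proof already invoked it; no further appeal to (B2) is needed here. A minor point worth a remark is that the hypothesis supplies the element $c$ inside the \emph{symmetric} neighborhood $v(b)v$, which is precisely the set over which barreledness quantifies, so there is no mismatch of topologies. Thus the argument is essentially a one-line reduction to Lemma~\ref{lm2.1} once the uniformizing element $c$ has been extracted.
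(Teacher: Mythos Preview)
Your argument is correct and follows exactly the paper's route: choose the $v$ furnished by (B1), use the hypothesis to extract a maximal $c\in v(b)v$, take $\lambda=\lambda_c$ from (B1), and invoke Lemma~\ref{lm2.1} to transfer $(a,b)\in\lambda_c B$ to every $a\in v(b)v$. The only cosmetic quibble is your opening phrase about ``verify[ing] the two defining properties of a barrel''---you are \emph{using} (B1) and (indirectly via the lemma) (B2), not verifying them---but the body of the proof makes this clear.
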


\begin{proof}
Let $B$ be a barrel in $\p$ and $b\in \p$. There exits $v\in V$ which satisfies in (B1) and by the  hypothesis, there is  $c\in v(b)v$ such that  $a\le c$ for all  $a\in v(b)v$. By (B1), there is $\lambda_c>0$ such that $(c,b)\in \lambda_c B$. Then the conditions of Lemma~\ref{lm2.1} hold and $\lambda_c>0$ is the same $\lambda>0$ which  we need in the definition of barreled cone.
\end{proof}

\section{A barreled cone which is not upper-barreled}

Now, we construct an example which shows that a barreled locally convex cone is not necessarily upper-barreled.
Consider the set
\begin{equation}\label{example}
\p= \{a_i \ |\  a\in (0,+\infty), i\in \mathbb{N}\}\cup  \{0_0,{\infty }_{\infty } \}.
\end{equation}

\begin{rem}\label{r1}
Note  that $i=0$ if and only if $a_i=0_0$, and $j=\infty$ if and only if $b_j=\infty_{\infty}$.
\end{rem}

We define  addition and  scalar multiplication on $\p$ as follows:\\
\begin{align*}
a_i+b_j:=\left \{
\begin{array}{ccc}
(a+b)_i &&  i=j,\\
a_i    && b_j=0_0,\\
b_j    && a_i=0_0,\\
\infty_\infty && i,j\in\Bbb{N}, \  i\neq j,
\end{array}
\right.
\end{align*}

 $$\lambda \cdot a_i:=(\lambda a)_i,$$  $$0 \cdot a_i=0_0,$$ for all  $a_i,b_j\in \p$ and positive reals $\lambda$.

Also we  consider  the preorder $\preceq$ on $\p$ as follows:

$$a_i\preceq b_j  \Leftrightarrow i=j \ \mbox{and} \  a\leq b.$$

The set $\p$ with the mentioned addition, scalar multiplication and preorder is a preordered cone with $0_0$ as the neutral element.
The set $\cv=\{v\in \Bbb{R} \ | \ v>0\}$ with   the  following property is an (abstract) 0-neighborhood system:\\
\begin{equation}\label{sys}
a_i\preceq b_j +v \ \ \iff \  \  (i=j, \  a \leq b+jv)\ \mbox{or} \  i=0 \ \mbox{ or} \ j=\infty,
\end{equation}
 (see Remark \ref{r1}).

Clearly $\p$ with the neighborhood system $\cv$ is a  locally convex cone.

\begin{prop}\label{p2.1}
In the  locally convex cone $\po$ constructed above, the indices of  members of any symmetric neighborhood of an element is equal to the index of that element.
\end{prop}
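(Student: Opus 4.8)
The statement asserts that if $a_i \preceq b_j + v$ and $b_j \preceq a_i + v$ hold (i.e. $a_i$ lies in a symmetric neighborhood $v(b_j)v$ of $b_j$), and neither is $0_0$ or $\infty_\infty$, then $i = j$. The natural strategy is to unwind the defining property (\ref{sys}) of the neighborhood system for both inequalities simultaneously and eliminate the degenerate cases using Remark~\ref{r1}.

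\medskip

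First I would fix $a_i, b_j \in \p$ with $a_i \in v(b_j)v$ for some $v \in \cv$, so that both $a_i \preceq b_j + v$ and $b_j \preceq a_i + v$ hold. Applying (\ref{sys}) to the first relation gives three possibilities: either $i = j$ (and we are done), or $i = 0$, or $j = \infty$. Applying (\ref{sys}) to the second relation similarly gives: $i = j$, or $j = 0$, or $i = \infty$. The plan is to show every combination other than "$i=j$'' forces one of the two elements to be $0_0$ or $\infty_\infty$, which is excluded precisely when we restrict to elements with indices in $\mathbb{N} \setminus \{0\}$ that are also finite — or, more simply, the proposition as stated should be read for a genuine element $a_i$ with $i \notin \{0, \infty\}$, and then the conclusion $i = j$ forces $j \notin \{0,\infty\}$ as well. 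Concretely: if $i = 0$ then $a_i = 0_0$ by Remark~\ref{r1}; if $i = \infty$ then $a_i = \infty_\infty$; and symmetrically for $j$. So in the non-degenerate case the only surviving option from the first relation is $i = j$.

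\medskip

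I would then record the immediate consequence, which is really the content of the proposition: a symmetric $v$-neighborhood of $a_i$ consists only of elements carrying the same index $i$ (together, possibly, with the special elements $0_0$ and $\infty_\infty$, which by convention behave as neutral/absorbing and whose indices are $0$ and $\infty$ by Remark~\ref{r1}). Since every symmetric neighborhood of $a_i$ contains one of the form $v(a_i)v$ and is contained in the whole cone, and since shrinking $v$ only shrinks the neighborhood, the statement holds for an arbitrary symmetric neighborhood, not just the basic ones.

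\medskip

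The only real subtlety — not so much an obstacle as a bookkeeping point — is the status of $0_0$ and $\infty_\infty$: by (\ref{sys}), $0_0 \preceq b_j + v$ always (take $i = 0$) and $a_i \preceq \infty_\infty + v$ always (take $j = \infty$), so these two elements lie in \emph{every} $v$-neighborhood and one must be careful to phrase "the index equals the index of the element'' so that it is consistent with Remark~\ref{r1}'s conventions $i = 0 \Leftrightarrow a_i = 0_0$ and $j = \infty \Leftrightarrow b_j = \infty_\infty$. With that reading the proof is just the case analysis above, and I expect it to be short.
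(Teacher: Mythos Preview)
Your case analysis on (\ref{sys}) is the same approach the paper takes, and the core step is right: for a center $b_j$ with $j\in\mathbb{N}$, the relation $a_i\preceq b_j+v$ forces $i=j$ or $i=0$, while $b_j\preceq a_i+v$ forces $i=j$ or $i=\infty$, so $i=j$.

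One correction, though. Your closing remark that $0_0$ and $\infty_\infty$ ``lie in every $v$-neighborhood'' is false for \emph{symmetric} neighborhoods and muddles an otherwise clean picture. Yes, $0_0\in v(b_j)$ always; but $0_0\in (b_j)v$ would require $b_j\preceq 0_0+v$, and by (\ref{sys}) that forces $j=0$. Dually, $\infty_\infty\in (b_j)v$ always, but $\infty_\infty\in v(b_j)$ forces $j=\infty$. Hence for $j\in\mathbb{N}$ neither special element survives the intersection $v(b_j)v$, and the proposition holds with no exceptions and no ``bookkeeping subtlety'' to manage. The paper's proof makes this explicit by simply computing
\[
v(b_j)v=\{a_j:\ a\in[b-jv,+\infty)\cap(0,b+jv]\},\qquad v(0_0)v=\{0_0\},\qquad v(\infty_\infty)v=\{\infty_\infty\}.
\]
Your own case analysis, if you also run it with $b_j=0_0$ and $b_j=\infty_\infty$ as centers, yields exactly these conclusions; so drop the hedging about degenerate elements possibly appearing and state the result cleanly.
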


\begin{proof}

Let  $v\in \cv$ be arbitrary. It is easy to see that $ v(0_0)=\{0_0\}$ and $(\infty_{\infty})v=\{\infty_{\infty}\}$ and then
 $ v(0_0)v=\{0_0\}$ and $v(\infty_{\infty})v=\{\infty_{\infty}\}$. For $j\neq 0, \infty$ (see Remark~\ref{r1}),
 $v(b_j)=\{a_j | a\leq b+jv\}\cup\{0_0\}$ and $(b_j)v=\{a_j \  |  \ b\leq a+jv\}\cup\{\infty_{\infty}\}$ and then
 $$ v(b_j)v=\{a_j \ | \  a\in [b-jv,+\infty)\cap(0,b+jv]\}.$$
\end{proof}

For each  $j\in \mathbb{N}$, we  set
$$\q_j=  \{b_j\in \p \}\cup \{0_0,{\infty }_{\infty }\}.$$

Clearly, for every $j\in \mathbb{N}$, $\q_j$
 is a subcone of $\p$,
$\p=\cup_{j\in\Bbb{N}}\q_j$ and by Proposition~\ref{p2.1}, $v(b_j)v \subseteq \q_j$ for all $ v\in \cv$ and all $b_j \in \p$.

\begin{rem}\label{rr}
\item[(i)] For every $j\in\Bbb{N}$, $\q_j$ is isomorphic  to $\bar{\Bbb{R}}_+$. Indeed, $\Lambda: \q_j\to\bar{\Bbb{R}}_+, \Lambda (a_i)=\frac{a}{j}$ is a bijective linear (uniformly) continuous monotone order preserving mapping. We note that the inverse of $\Lambda$ is not (uniformly) continuous.
    \item[(ii)] The upper  (and then the symmetric) neighborhoods of $0$ in $\bar{\Bbb{R}}_+$ and $0_0$ in $\q_j$ are different. Indeed, for each $v\in\cv$ we have
        $v(0)=[0,v]$ and $v(0_0)=\{0_0\}$.
\item[(iii)] The locally convex cone $(\bar{\Bbb{R}}_+,\cv)$ is full ($\cv\subset \bar{\Bbb{R}}_+$), but $(\q_j,\cv)$ is not so.
\end{rem}
\begin{lem}
For each $a_j\in \p$ and $\mu\in \p^*$, $\mu(a_j)\ge 0$.
\end{lem}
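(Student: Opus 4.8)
The plan is to use the structure of $\p$ as a union of the subcones $\q_j$ together with the observation that each $\q_j$ is order-isomorphic to $\bar{\Real}_+$, whose dual cone is known explicitly. First I would note that since $a_j\in\q_j$ and $\q_j$ is a subcone of $\p$, the restriction $\mu|_{\q_j}$ is a linear functional on $\q_j$; what we need is that this restriction takes nonnegative values at $a_j$. The idea is to transport $\mu|_{\q_j}$ along the isomorphism $\Lambda:\q_j\to\bar{\Real}_+$ of Remark~\ref{rr}(i) and use the known description of $(\bar{\Real}_+)^*$ recalled in the Introduction, namely that it consists of the nonnegative reals together with the functional $\bar 0$ (which is also nonnegative on $\bar{\Real}_+$).

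The key step is therefore to check that $\mu\circ\Lambda^{-1}$ is a continuous linear functional on $(\bar{\Real}_+,\cv)$, so that the explicit form of the dual cone applies. Linearity is immediate from linearity of $\mu$ and $\Lambda^{-1}$. For continuity one must be a little careful: Remark~\ref{rr}(i) warns that $\Lambda^{-1}$ is \emph{not} continuous, so I cannot simply compose continuity of $\mu$ with continuity of $\Lambda^{-1}$. Instead I would argue directly. Since $\mu\in\p^*$, there is $v\in\cv$ with $\mu(a)\le\mu(b)+1$ whenever $a\preceq b+v$ in $\p$. Working inside $\q_j$ and using the neighborhood description~(\ref{sys}), we have $a_j\preceq b_j+v$ in $\p$ exactly when $a\le b+jv$ (or one of the trivial index cases); transporting to $\bar{\Real}_+$ via $\Lambda$, this says that $s\le t+v$ in $\bar{\Real}_+$ implies $(\mu\circ\Lambda^{-1})(s)\le(\mu\circ\Lambda^{-1})(t)+1$, possibly after rescaling $v$ by the fixed positive constant $1/j$, which is harmless since $\cv$ is closed under positive scalar multiples. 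Hence $\mu\circ\Lambda^{-1}\in(\bar{\Real}_+)^*$.

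Once this is established, the conclusion is immediate: by the description of $(\bar{\Real}_+)^*$ from the Introduction, $\mu\circ\Lambda^{-1}$ is either multiplication by a nonnegative real or the functional $\bar 0$, and in either case it is nonnegative on all of $\bar{\Real}_+$. Applying this to $\Lambda(a_j)=a/j\in\bar{\Real}_+$ gives $\mu(a_j)=(\mu\circ\Lambda^{-1})(a/j)\ge 0$. The cases $a_j=0_0$ and $a_j=\infty_\infty$ can be handled separately and easily: $\mu(0_0)=\mu(0\cdot 0_0)=0\cdot\mu(0_0)\ge 0$ trivially, and $\infty_\infty$ lies in every $\q_j$ as the top element, so the same argument covers it (with $\mu(\infty_\infty)$ equal to $+\infty$ or $0$).

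I expect the main obstacle to be precisely the continuity verification for $\mu\circ\Lambda^{-1}$, because of the explicit warning that $\Lambda^{-1}$ fails to be continuous; the resolution is that continuity of a \emph{functional} only requires a single $v$ to work, and the mismatch between the neighborhoods of $\bar{\Real}_+$ and of $\q_j$ (Remark~\ref{rr}(ii)) is absorbed by the scalar $j$ and the freedom to shrink $v$. Everything else is bookkeeping with the definitions~(\ref{example}), (\ref{sys}) and the dual of $\bar{\Real}_+$.
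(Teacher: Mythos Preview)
Your central claim—that $\mu\circ\Lambda^{-1}\in(\bar{\Real}_+)^*$—is false, and the failure is exactly at the point you flagged but dismissed. Take $\mu=\tilde{\bar\infty}\in\p^*$ (the functional with $\tilde{\bar\infty}(0_0)=0$ and $\tilde{\bar\infty}(x)=+\infty$ otherwise; one checks directly from (\ref{sys}) that $a_i\preceq 0_0+v$ forces $a_i=0_0$, so $\tilde{\bar\infty}\in v^\circ$ for every $v$). Its transport $\mu\circ\Lambda^{-1}$ sends $0\mapsto 0$ and every $s>0$ to $+\infty$. This is \emph{not} in $(\bar{\Real}_+)^*$: for any $v>0$ we have $v/2\le 0+v$ in $\bar{\Real}_+$, yet $(\mu\circ\Lambda^{-1})(v/2)=+\infty\not\le 0+1$. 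Thus the explicit description of $(\bar{\Real}_+)^*$ cannot be invoked.

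The gap in your continuity check is the ``trivial index case'' $t=0$. Here $\Lambda^{-1}(0)=0_0$ has index $0$, not $j$, and by (\ref{sys}) (equivalently Remark~\ref{rr}(ii)) one has $v(0_0)=\{0_0\}$ for \emph{every} $v$, whereas $v(0)=[0,v]$ in $\bar{\Real}_+$. No rescaling or shrinking of $v$ turns a single point into an interval; the mismatch is qualitative, not a matter of a factor $1/j$. So the implication $s\le 0+v\Rightarrow \Lambda^{-1}(s)\preceq 0_0+v'$ simply fails for $s>0$, and with it the continuity of $\mu\circ\Lambda^{-1}$ in general.

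The paper's argument bypasses all of this with a one-line direct computation: since $0_0$ has index $0$, (\ref{sys}) gives $0_0\preceq a_j+\lambda v$ for every $\lambda>0$; choosing $v$ with $\mu\in v^\circ$ and scaling yields $0=\mu(0_0)\le\mu(a_j)+\lambda$ for all $\lambda>0$, hence $\mu(a_j)\ge 0$. In other words, the very feature that breaks your transport argument (the index-$0$ clause in (\ref{sys})) is what makes the direct proof immediate.
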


\begin{proof}
Let $v\in \cv$ such that $\mu \in v^{\circ}$. By the definition of the preorder in $\p$, $0_0\preceq a_j +\lambda v$ for all $\lambda >0 $. So $\mu(0_0)\leq \mu(a_j) +\lambda $ for all $\lambda >0$, then $0\leq \mu(a_j)$.

\end{proof}

For investigation of barreledness in locally convex cones, we need to know duals of cones. 

Let  $\mu\in\q_j^*$ be an arbitrary nonzero element. We have $\mu(0_0)=0$ (since $\mu$ is linear). Also for all $a_j\in\q_j$, we have $a_j=a\cdot 1_j$ (note that since $j\in \Bbb{N}$, then $a_j\neq 0_0 ,\infty_{\infty}$). Then $\mu(a_j)=a\mu(1_j)$. Set $\lambda=\mu(1_j)$. We have $\lambda\in\bar{\Bbb{R}}$ and $\mu(a_j)=\lambda a$. Since $\mu$ is (uniformly) continuous, there exists $v\in\cv$ such that $x\le y+v$ implies $\mu(x)\le \mu(y)+1$ for all $x, y\in\q_j$. We know that  $0_0 \leq a_j+v$ for all $a\in (0,+\infty)$. Then $0\leq\lambda a+1$ for all $a\in \Bbb{R}_+$. This yields that $\lambda \in \bar{\Bbb{R}}_+$. Linearity of $\mu$ implies that $\mu(\infty_{\infty})=0  \  \mbox{or} \  \infty$. On the other hand $a_j\le \infty_\infty+v$ for all $a\in (0,+\infty)$. We conclude that $\lambda a\preceq \mu(\infty_\infty)+1$ for all $a\in (0,+\infty)$. This yields that $\lambda=0$ or $\mu(\infty_\infty)=\infty$. But $\lambda=0$ implies $\mu(\infty_\infty)=\infty$ again, since $\mu$ is nonzero.  Hence the elements of $\q_j^*$ are exactly $\lambda_j$, by
\begin{align}\label{lambda}
\lambda_j(a_i):=\left \{
\begin{array}{ccc}
0 &&  i=0,\\
\lambda a    && i=j,\\
+\infty && i=\infty,
\end{array}
\right.
\end{align}
for all $\lambda\in\Bbb{R}_+$, $\bar 0_j$, by
\begin{align}\label{0bar}
\bar 0_j(a_i):=\left \{
\begin{array}{ccc}
0 &&  i=0\ \mbox{or}\ i=j,\\
+\infty && i=\infty,
\end{array}
\right.
\end{align}
and $\bar\infty$ by
\begin{align}\label{infbar}
\bar \infty(a_i):=\left \{
\begin{array}{ccc}
0 &&  i=0,\\
+\infty && els.
\end{array}
\right.
\end{align}
In fact $\q_j^*=\Bbb{R}_+\cup \{\bar{0_j},\bar{\infty}\}$.

Note that $\bar{\infty}$ as a linear functional is not (uniformly) continuous from $\bar{\Bbb{R}}_+$ to $\bar{\Bbb{R}}_+$. It is not even linear from $\bar{\Bbb{R}}$ to
$\bar{\Bbb{R}}$.
\begin{lem}\label{lq}
Let $(\q_k,\cv)$ be the locally convex cone constructed above. Then
\item[(i)] The polar $v^{\circ}$ contains the functionals $0_k, \bar 0_k, \bar{\infty}$ for each $v\in\cv$.
\item[(ii)] For each $w\in\cv$, $(\frac{1}{w})_k\in(\frac{w}{k})^{\circ}$.
\end{lem}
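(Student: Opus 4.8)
The plan is to verify the two claims directly against the explicit description of $\q_k^*$ obtained above, namely $\q_k^* = \Real_+ \cup \{\bar 0_k, \bar\infty\}$, together with the defining relation \eqref{sys} for the neighborhood system on $\q_k$ (which is \eqref{sys} specialized to the single index $k$). Recall that $\mu \in v^\circ$ means precisely that $\mu(x) \le \mu(y) + 1$ whenever $x \preceq y + v$.

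For part (i), I would fix an arbitrary $v \in \cv$ and check the three listed functionals one at a time. The zero functional $0_k$ (the element $\lambda_k$ with $\lambda = 0$, which sends every $a_k$ to $0$ and $\infty_\infty$ to $+\infty$) is obviously in $v^\circ$: the inequality $0_k(x) \le 0_k(y) + 1$ can only fail if $0_k(x) = +\infty$, i.e. $x = \infty_\infty$, but then $x \preceq y+v$ forces $y = \infty_\infty$ as well (by the definition of $\preceq$, nothing dominates $\infty_\infty$ except itself up to the $j=\infty$ clause), so $0_k(y)=+\infty$ too. The same argument handles $\bar 0_k$, since $\bar 0_k$ differs from $0_k$ only in that it still vanishes at $0_0$, which changes nothing. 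For $\bar\infty$, note $\bar\infty(x) = +\infty$ unless $x = 0_0$; so again the only way $\bar\infty(x) \le \bar\infty(y)+1$ could fail is $x \neq 0_0$ and $y = 0_0$, i.e. $x \preceq 0_0 + v$ with $x \neq 0_0$. But by \eqref{sys}, $x \preceq 0_0 + v$ with $x = a_i$ requires either $i = 0$ (so $x = 0_0$, excluded), or the index of $0_0$ equals $\infty$ (false), or $i = \infty$ and $x = \infty_\infty$ — and $\infty_\infty \not\preceq 0_0 + v$ since $0_0 + v = 0_0$ and nothing but $0_0$ is $\preceq 0_0$. Hence no such $x$ exists and $\bar\infty \in v^\circ$.

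For part (ii), I must show that $\mu := (1/w)_k$, i.e. the functional $\lambda_k$ with $\lambda = 1/w$, lies in $(w/k)^\circ$. So I take $x = a_k$, $y = b_k$ (the cases involving $0_0$ or $\infty_\infty$ being trivial as above) with $a_k \preceq b_k + (w/k)$, and I must deduce $\mu(a_k) \le \mu(b_k) + 1$, i.e. $a/w \le b/w + 1$, i.e. $a \le b + w$. By the neighborhood relation \eqref{sys} with neighborhood value $w/k$, the hypothesis $a_k \preceq b_k + (w/k)$ unwinds (since both indices are $k \neq 0, \infty$) to $a \le b + k\cdot(w/k) = b + w$, which is exactly what is needed. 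I would also record that the case $x = 0_0$ gives $\mu(0_0) = 0 \le \mu(y) + 1$ automatically (the right side is $\ge 0$ by the preceding lemma), and the case $y = \infty_\infty$ gives $\mu(y) = +\infty$, so the inequality is trivial.

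The only subtlety — and the step I would be most careful with — is the bookkeeping around the index-$\infty$ and index-$0$ elements in part (i): one must use both Remark~\ref{r1} and the precise three-clause form of \eqref{sys} to be sure that $\infty_\infty \not\preceq 0_0 + v$ and that nothing sneaks in to violate the polar inequalities. Once that is pinned down, everything else is a one-line substitution into the definitions, so there is no genuine obstacle.
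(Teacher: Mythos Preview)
Your proof is correct and follows essentially the same approach as the paper's: the paper simply declares part~(i) ``clear'' and for part~(ii) does exactly your computation, splitting off the trivial cases $i=0$ or $j=\infty$ and then using \eqref{sys} with $i=j=k$ to get $a\le b + k\cdot\frac{w}{k}=b+w$, hence $\frac{a}{w}\le\frac{b}{w}+1$. Your only superfluous step is the third subcase in the $\bar\infty$ analysis (there is no ``$i=\infty$'' clause in \eqref{sys}; with $j=0$ the only option is $i=0$), but this does not affect correctness.
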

\begin{proof}
The proof of $(i)$ is clear.
For $(ii)$, let $a_i,b_j\in \q_k$, $a_i\preceq b_j +\frac{w}{k}$ and $\mu=(\frac{1}{w})_k$.
  It is easy to that if $i=0$ or $j=\infty$, then $\mu(a_i)\le\mu(b_j)+1$. Otherwise, we have $i=j=k$. Then $a\le b+k \frac{w}{k}$ and so  $\frac{1}{w}a\le \frac{1}{w}b+1$ i.e. $\mu(a_i)\le\mu(b_j)+1$.
\end{proof}

Now, we investigate the dual of $\p$. Let $\tilde{\mu}: \p\to\bar{\Bbb{R}}$ be a nonzero linear mapping.  Let $i,j\in\Bbb{N}$ and $i\neq j$. So by the definition of the additivity $a_i+b_j=\infty_{\infty}$. We have $\tilde{\mu}(a_i)+\tilde{\mu}(b_j)=\infty$. Then $\tilde{\mu}(a_i)=\infty$ or $\tilde{\mu}(b_j)=\infty$. Suppose $\tilde{\mu}(a_i)<\infty$. Then $\tilde{\mu}(a'_i)<\infty$ for all $a'_i\in \q_i$ (since $i\neq 0,\infty$, so $a'_i=\frac{a'}{a}a_i$). Hence, by the same implication, $\tilde{\mu}(b_j)=\infty$ for all $b_j\in Q_j$ (and so for all $b_j\in \p\setminus\q_i$). This yields that if $\tilde{\mu}\in \p^*$ is nonzero element and $\mu$ is the restriction of $\tilde{\mu}$ on $\q_i$, then  $\tilde{\mu}$ can be uniquely written as follows:
\begin{equation}
{\widetilde{\mu }} (x )= \left\{ \begin{array}{c}
\mu  (x )\ \ \ \ \ \ \ \ \ x\in \q_i, \\
\infty \ \ \ \ \ \ \ \ \ \ \ \ \ \ x\notin \q_i.\  \end{array}\right.
\end{equation}
By (\ref{lambda}), (\ref{0bar}) and (\ref{infbar}), the elements of $\p^*$ are: the linear functional $0$,
\begin{equation}
{\widetilde{\lambda }}_i (x )= \left\{ \begin{array}{c}
\lambda_i (x)\ \ \ \ \ \ \ \ \ \ \ \ x\in \q_i, \\
\infty \ \ \ \ \ \ \ \ \ \ \ \ \ \ x\notin \q_i,\  \end{array}\right.
\end{equation}
for all $\lambda\in\Bbb{R}_+$ and  for all $i\in \Bbb N$,
\begin{equation}
{\widetilde{\bar 0 }}_i (x )= \left\{ \begin{array}{c}
 0 \ \ \ \ \ \ \ \ \ x\in \q_i\setminus \{\infty_{\infty}\}, \\
\infty \ \ \ \ \ \ \ \ x\notin \q_i\setminus \{\infty_{\infty}\},\  \end{array}\right.
\end{equation}
 for all $i\in \Bbb N$
and
\begin{align}\label{infty}
\widetilde{\bar\infty }(x):=\left \{
\begin{array}{ccc}
0 &&  x=0_0,\\
+\infty && els.
\end{array}
\right.
\end{align}

\begin{lem}\label{lp}
Let $(\p,\cv)$ be the locally convex cone constructed in the above. Then
\item[(i)] The polar $v^{\circ}$ contains the functionals $0$, $\tilde0_k, \tilde{\bar 0}_k, \tilde{\bar{\infty}}$ for each $v\in\cv$ and all $k\in \Bbb{N}$.
\item[(ii)] For each $w\in\cv$ and each  $k\in \Bbb{N}$, $\tilde{(\frac{1}{w})}_k\in(\frac{w}{k})^{\circ}$.
\item[(iii)] If $a_i,b_j\in \p$, $a_i\preceq b_j +v$ and $j\notin\{ 0, k\}$, then $\tilde{(\frac{1}{w})}_k(a_i)\le \tilde{(\frac{1}{w})}_k(b_j)+1$ for all $v,w\in\cv$ and all $k\in \Bbb{N}$.
\end{lem}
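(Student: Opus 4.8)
The plan is to verify each of the three assertions directly against the definition of the polar and the explicit description of the neighborhood system \eqref{sys}, exploiting the fact—already recorded after Proposition~\ref{p2.1}—that any relation $a_i\preceq b_j+v$ forces $i=j$ unless $i=0$ or $j=\infty$. Throughout, I would keep in mind the formulas \eqref{lambda}, \eqref{0bar}, \eqref{infbar} together with their tilde-extensions, and the structural fact that for a functional of tilde-type, the value at any point outside $\q_i$ is $+\infty$, so an inequality $\mu(a)\le\mu(b)+1$ is automatic whenever the right-hand side is $+\infty$.

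For part (i), I would argue that $0$, $\tilde 0_k$, $\tilde{\bar 0}_k$, $\tilde{\bar\infty}$ all belong to $v^\circ$ for every $v\in\cv$ simply because each of them is \emph{monotone for the preorder} in the strong sense that $a_i\preceq b_j+v$ already implies $\mu(a_i)\le\mu(b_j)$ (hence a fortiori $\le\mu(b_j)+1$). Indeed, if $i=0$ then $\mu(a_i)=\mu(0_0)=0\le\mu(b_j)$ by the previous lemma (every functional on $\p$ is nonnegative on each $a_j$, and the value at points outside the home-cone is $+\infty$); if $j=\infty$ then $\mu(b_j)=+\infty$ and there is nothing to prove; and if $i=j$ with $a\le b+jv$ then $a\le b$ is false in general, so here one instead uses that $\tilde 0_k,\tilde{\bar 0}_k,\tilde{\bar\infty}$ take only the values $0$ and $+\infty$ on $\q_k$ and are constant on $\q_k\setminus\{\infty_\infty\}$, making the inequality trivial, while $0$ is the zero functional. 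This is really just bookkeeping over the finitely many cases of the definition of $\preceq+v$.

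For part (ii), I would observe that this is the exact analogue, inside $\p$, of Lemma~\ref{lq}(ii): given $a_i\preceq b_j+\frac{w}{k}$, either $i=0$ or $j=\infty$ (in which cases the inequality for $\tilde{(1/w)}_k$ holds as in part (i), using that the functional is $0$ at $0_0$ and $+\infty$ outside $\q_k$), or $i=j$. In the latter case one splits further on whether $i=j=k$—giving $a\le b+k\cdot\frac{w}{k}=b+w$, hence $\frac1w a\le\frac1w b+1$, i.e.\ $\tilde{(1/w)}_k(a_i)\le\tilde{(1/w)}_k(b_j)+1$ exactly as in Lemma~\ref{lq}(ii)—or $i=j\neq k$, in which case both $a_i$ and $b_j$ lie outside $\q_k$, so both sides are $+\infty$ (more precisely the right side is $+\infty$) and the inequality is automatic. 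Part (iii) then follows from the same dichotomy: the hypothesis $j\notin\{0,k\}$ together with $a_i\preceq b_j+v$ forces either $j=\infty$, whence $b_j\notin\q_k$ and $\tilde{(1/w)}_k(b_j)=+\infty$, or $i=j$ with $j\neq k$, whence again $b_j\notin\q_k$ and the right-hand side is $+\infty$; in every admissible configuration the right-hand side is $+\infty$, so the inequality is immediate.

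I do not expect a genuine obstacle here; the only point requiring care is making the case analysis exhaustive and correctly reading off the values of the tilde-functionals from \eqref{lambda}–\eqref{infbar}, in particular keeping straight that a relation $a_i\preceq b_j+v$ never relates elements of two different cones $\q_i$, $\q_j$ with $i,j\in\Bbb N$ distinct, and that $0_0$ sits below everything while $\infty_\infty$ sits above everything in each direction that matters. The mild subtlety in (i) is that one cannot claim full monotonicity $a_i\preceq b_j\Rightarrow\mu(a_i)\le\mu(b_j)$ \emph{and} then add the $+v$; rather one must handle the $i=j$, $a\le b+jv$ case by noting the listed functionals are insensitive to the size of $a$ on $\q_k$ (they are $0$ or $+\infty$ there), which is why the lemma singles out precisely $0,\tilde 0_k,\tilde{\bar 0}_k,\tilde{\bar\infty}$ and not a general $\tilde\lambda_k$ with $\lambda>0$.
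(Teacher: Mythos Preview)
Your proposal is correct and follows the same case-analysis approach as the paper, which simply says parts (i) and (ii) are proved ``similarly to Lemma~\ref{lq}'' and dispatches (iii) in one line. Two small points are worth tightening. First, in part~(iii) you write ``$j=\infty$, whence $b_j\notin\q_k$''; this is false, since $\infty_\infty\in\q_k$ for every $k$ by definition of $\q_k$. The conclusion $\tilde{(1/w)}_k(\infty_\infty)=+\infty$ is nonetheless correct, coming from \eqref{lambda} rather than from being outside $\q_k$. Second, your dichotomy in (iii) omits the case $i=0$ with $j\in\Bbb N\setminus\{k\}$, which is allowed by \eqref{sys}; more to the point, the paper's argument is simpler than yours here: the hypothesis $a_i\preceq b_j+v$ is not used at all, because $j\notin\{0,k\}$ alone already forces $\tilde{(1/w)}_k(b_j)=+\infty$ (either $j=\infty$ and one reads off \eqref{lambda}, or $j\in\Bbb N\setminus\{k\}$ and $b_j\notin\q_k$), making the inequality automatic regardless of $a_i$.
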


\begin{proof}
The proofs of $(i)$ and $(ii)$ are similar to Lemma~\ref{lq}.
For $(iii)$, we have $\tilde{(\frac{1}{w})}_k(b_j)=\infty$ and so the proof is complete.
\end{proof}

\begin{prop}
The  cones  $\q_j$ and $\p$ with $\cv$ as an (abstract) 0-neighborhood system are barreled for all $j\in \mathbb{N}
$.
  \end{prop}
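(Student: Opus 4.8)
The plan is to verify barreledness of $\q_j$ first, then bootstrap to $\p$ using Theorem~\ref{th2.1} or a direct argument. For $\q_j$: since $\q_j$ is isomorphic as a preordered cone (though not topologically) to $\bar{\Real}_+$ via $\Lambda$, and the dual $\q_j^* = \Real_+ \cup \{\bar 0_j, \bar\infty\}$ has been completely computed, I would take an arbitrary barrel $B \subseteq \q_j^2$ and an arbitrary $b_k \in \q_j$, and produce the required $v \in \cv$ and $\lambda > 0$. By Proposition~\ref{p2.1}, the symmetric neighborhood $v(b_j)v$ for $b = b_j$ with $j \in \Bbb N$ is the interval-type set $\{a_j : a \in [b - jv, +\infty) \cap (0, b+jv]\}$, which has a largest element, namely $(b+jv)_j$. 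So Theorem~\ref{th2.1} applies directly once I check its hypothesis for the three cases $b = 0_0$, $b = \infty_\infty$, $b = b_j$ with $j \in \Bbb N$ — the first two give singleton neighborhoods (trivially their own maximum), and the third gives $c = (b+jv)_j$ as the required maximum. Hence $\q_j$ is barreled by Theorem~\ref{th2.1}.

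For $\p$ itself the same strategy works: I would check that $\p$ satisfies the hypothesis of Theorem~\ref{th2.1}. Fix $b \in \p$ and $v \in \cv$. If $b = 0_0$ then $v(0_0)v = \{0_0\}$; if $b = \infty_\infty$ then $v(\infty_\infty)v = \{\infty_\infty\}$; and if $b = b_j$ with $j \in \Bbb N$ then by Proposition~\ref{p2.1} $v(b_j)v \subseteq \q_j$ and equals $\{a_j : a \in [b-jv,+\infty)\cap(0,b+jv]\}$, whose maximum is $c = (b+jv)_j \in v(b_j)v$. In every case there is $c \in v(b)v$ with $a \preceq c$ for all $a \in v(b)v$, so Theorem~\ref{th2.1} gives that $\p$ is barreled.

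I expect the main obstacle to be confirming that condition (B2) of Definition~\ref{d1} plays no hidden role — i.e.\ that Theorem~\ref{th2.1} really does apply without needing to revisit the dual cone. The point of having listed all elements of $\q_j^*$ and $\p^*$ (via Lemmas~\ref{lq} and~\ref{lp}) is presumably to make the separation property (B2) transparent for barrels in these particular cones, but Theorem~\ref{th2.1}'s proof already absorbs (B1) and Lemma~\ref{lm2.1} absorbs the interaction of (B2) with the order, so the argument above should go through cleanly. I would double-check the edge cases where $v(b_j)v$ could be empty or fail to contain its supremum (it does contain $(b+jv)_j$ since $b + jv > 0$), and the fact that scalar multiples $\lambda B$ behave correctly under the identification $a_j = a\cdot 1_j$; these are the only places a routine verification could slip.
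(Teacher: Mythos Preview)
Your proposal is correct and follows essentially the same route as the paper: both arguments verify the hypothesis of Theorem~\ref{th2.1} by observing that $v(0_0)v$ and $v(\infty_\infty)v$ are singletons while $v(b_j)v$ has $(b+jv)_j$ as its maximum, and then invoke Theorem~\ref{th2.1} to conclude barreledness of both $\p$ and $\q_j$. Your write-up is in fact slightly cleaner than the paper's, which begins by fixing a barrel $B$ and an element $b_i$ before pivoting to the hypothesis of Theorem~\ref{th2.1}; and your closing remark that the dual-cone computations in Lemmas~\ref{lq} and~\ref{lp} are not needed here (since Lemma~\ref{lm2.1} already absorbs the role of (B2)) is correct --- those lemmas are used only later, for the upper-barreledness counterexample.
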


  \begin{proof}
 We prove that $\p$ is barreled. Let $B$ be a  barrel in $\p$ and let $b_i\in \p$. For $i=0,\infty$, by Proposition~\ref{p2.1},  all $v\in \cv$ and all $\lambda>0$ satisfy in the definition of barreled cones. Let $j\in\Bbb{N}$. By the definition of the barrel (B1), there exits
  $v\in V$ such that for each $a_j\in v(b_j)v$, there is $\lambda_{a_j}>0$ such that $(a_j, b_j)\in\lambda_{a_j}B$ (see Proposition~\ref{p2.1}). Let $c:= b+jv$. By Theorem~\ref{th2.1}, $\p$ is barreled. For $\q_j$, the proof is similar.
  \end{proof}

\begin{thm}
The locally convex cone $(\p,\cv)$ constructed in the above  is not upper-barreled.
 \end{thm}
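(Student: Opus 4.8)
The plan is to exhibit an explicit barrel $B\subseteq\p^2$ for which no $\tilde v$ can be contained in $B$. Since being upper-barreled requires that for \emph{every} barrel $B$ there is $v\in\cv$ with $\tilde v\subseteq B$, producing one bad barrel suffices. The natural candidate is built from the dual description we have just computed: take
\begin{equation*}
B=\{(a_i,b_j)\in\p^2 \ : \ \mu(a_i)\le\mu(b_j)+1 \text{ for all } \mu\in S\}
\end{equation*}
for a carefully chosen set $S\subseteq\p^*$. Convexity of $B$ is automatic, and (B2) is built in by construction (if $(a,b)\notin B$ some $\mu\in S$ separates). The real work is to pick $S$ so that (B1) holds \emph{and} so that $B$ fails to absorb any full neighborhood $\tilde v$.

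First I would choose $S=\{\,\tilde{(k)}_k \ : \ k\in\Bbb N\,\}$, i.e. for each $k$ the functional $\widetilde{\lambda}_k$ with $\lambda=k$, which by Lemma~\ref{lp}(ii) (with $w=1/k$... more precisely $w=1$ so that $\tilde{(1)}_k\in(\tfrac1k)^\circ$, rescaled) lies in the polar of $\tfrac1k$ on the $k$-th branch. Then on the branch $\q_k$, the condition ``$\mu(a_k)\le\mu(b_k)+1$ for $\mu=\tilde{(k)}_k$'' reads $ka\le kb+1$, i.e. $a\le b+\tfrac1k$; and all the other functionals $\tilde{(m)}_m$ with $m\ne k$ take the value $+\infty$ on elements of $\q_k\setminus\{0_0\}$ (by Lemma~\ref{lp}(iii) the separation they would demand is vacuous on the $k$-branch), so they impose no further restriction there. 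Hence $B\cap(\q_k\times\q_k)$ is governed on the positive part exactly by $a\le b+\tfrac1k$. To check (B1): given $b_i\in\p$, for $i=0$ or $i=\infty$ the symmetric neighborhood is a singleton (Proposition~\ref{p2.1}) and any $v,\lambda$ work; for $i=k\in\Bbb N$, pick $v$ small enough that $v(b_k)v\subseteq\q_k$ (always true, Proposition~\ref{p2.1}) and $v\le 1/k$, and then for $a_k\in v(b_k)v$ one has $a\le b+kv\le b+1$, so $(a_k,b_k)\in 1\cdot B$; scaling gives the required $\lambda$. Thus $B$ is a genuine barrel.

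Next I would show $\tilde v\not\subseteq B$ for every $v\in\cv$. Fix $v>0$ and choose $k\in\Bbb N$ with $kv>1$ (Archimedean property). Pick any $a,b\in(0,+\infty)$ with $b<a\le b+kv$ but $a>b+1$; such $a,b$ exist precisely because $kv>1$. Then $a_k\preceq b_k+v$ by the defining rule (\ref{sys}), so $(a_k,b_k)\in\tilde v$. But $\tilde{(k)}_k(a_k)=ka>kb+1=\tilde{(k)}_k(b_k)+1$ (wait --- I need $ka>kb+1$; from $a>b+1/k$, not $a>b+1$; so I should instead choose $a>b+1/k$ with $a\le b+kv$, which requires only $kv>1/k$, hence this works for all large $k$, or even: given \emph{any} $v$ pick $k$ with $kv>1/k$, i.e. $k^2v>1$). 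With that choice $(a_k,b_k)\in\tilde v\setminus B$, so $\tilde v\not\subseteq B$.

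The main obstacle I anticipate is verifying (B1) cleanly --- specifically making sure the chosen $v$ simultaneously forces the neighborhood into the correct branch $\q_k$ (handled by Proposition~\ref{p2.1}) and makes the scalar $\lambda$ uniform over all of $v(b_k)v$ (handled because on $\q_k$ the barrel's constraint is the single linear inequality $a\le b+\tfrac1k$, so one scaling factor depending only on $v$ and $k$ works for every $a$ in the symmetric neighborhood). A secondary subtlety is confirming that the ``other-branch'' functionals $\tilde{(m)}_m$, $m\ne k$, together with $\tilde{\bar 0}_i$, $\tilde{\bar\infty}$ and $0$ (which by Lemma~\ref{lp}(i) all sit in every polar and are harmless), do not accidentally shrink $B$ on the diagonal branches in a way that breaks (B1); Lemma~\ref{lp}(iii) is exactly the tool that rules this out, since any such functional evaluates to $+\infty$ on the relevant elements and the inequality $\infty\le\infty+1$ is vacuously true. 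Once these points are in place, $B$ is a barrel with $\tilde v\not\subseteq B$ for all $v\in\cv$, so $(\p,\cv)$ is not upper-barreled, while the previous Proposition already gives that it is barreled. $\Box$
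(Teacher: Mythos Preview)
Your approach is essentially the same construction as the paper's, viewed dually. The paper defines $B=\bigcup_{j}B_j$ with $B_j=\{(x,y)\in\q_j^2: x\preceq y+\tfrac{w}{j}\}$ for a fixed $w$; unwinding, this is exactly the set $\{(x,y):\tilde{(\tfrac1w)}_k(x)\le\tilde{(\tfrac1w)}_k(y)+1\ \forall k\}$, so the paper's barrel is your polar-type set for $S=\{\tilde{(\tfrac1w)}_k:k\in\Bbb N\}$ (constant $\lambda=\tfrac1w$) instead of your $S=\{\tilde{(k)}_k:k\in\Bbb N\}$ (growing $\lambda=k$). Both choices work, and your packaging --- convexity and (B2) for free from the functional definition --- is clean.

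There is one genuine slip in your (B1) check. On branch $k$ the membership $(a_k,b_k)\in B$ means $\tilde{(k)}_k(a_k)\le\tilde{(k)}_k(b_k)+1$, i.e.\ $a\le b+\tfrac1k$, \emph{not} $a\le b+1$. So from $v\le\tfrac1k$ you only get $a\le b+kv\le b+1$, which does not give $(a_k,b_k)\in 1\cdot B$. The fix is immediate: either take $v\le\tfrac{1}{k^2}$ (then $a\le b+kv\le b+\tfrac1k$ and $\lambda=1$ works), or keep $v\le\tfrac1k$ and take $\lambda=k$ (since $(a_k,b_k)\in kB\iff a\le b+1$). Either way (B1) holds uniformly over $v(b_k)v$, which is actually the stronger barreledness conclusion. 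Your later self-correction in the non-containment step (replacing $a>b+1$ by $a>b+\tfrac1k$, hence needing $k^2v>1$) is exactly right; you should clean up the exposition so the final choice is stated once.
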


\begin{proof}
Let $w\in \cv$ be a fixed element. For each $\ j\in \mathbb{N}$, we define
\begin{equation}\label{Bj}
B_j:=
	\{(a_i,b_k) \ | \ a_i,b_k\in \q_j \  \mbox{and} \ a_i \preceq b_k +\frac{w}{j}\}.
\end{equation}
 We show that $B_j$ is a barrel in $\q_j$:\\
 For (B1), it is enough to consider the same $\frac{w}{j}$ and $\lambda =1$. For (B2), let $ (c_n,d_m)\notin B_j$.
  So $c_n \npreceq d_m +\frac{w}{j}$. By (\ref{sys}), we have two cases: \\
  Case (i):  $n=m=j$ ($c_n, d_m\in\q_j$) and $c> d +w$.
   For  $\mu = {(\frac{1}{w})}_j \in \q_j^*$, we have $\mu(c_n)> \mu(d_m) +1$, and  $\mu(a_i)\le \mu(b_k) +1$ for all $(a_i,b_k)\in B_j$.\\
   Case (ii): $m=0$ and $n\neq 0$. We consider the functional $\mu=\bar{\infty}$. We have $\mu (c_n)=+\infty$ and $\mu (d_m)=0$. So $\mu(c_n)> \mu(d_m) +1$, and  $\mu(a_i)\le \mu(b_k) +1$ for all $(a_i,b_k)\in B_j$.\\

  Now, we define
\begin{equation}\label{B}
B:=\bigcup_{j\in \mathbb{N}}{B_j}.
\end{equation}
We show that $B$ is a barrel in $\p$.

(B1): Let $b_i\in \p$ be arbitrary. Then $b_i\in \q_j$ form some $j\in\Bbb{N}$. Note that $i\in\{0,j,\infty\}$. The set $B_j=B\cap (\q_j\times \q_j)$ is a barrel in $\q_j$. 
 So by considering $ v=\frac{w}{j}\in \cv$ and $ \lambda =1$, $(a_k,b_i )\in \lambda B_j\subseteq \lambda B$ for all $a_k\in v (b_i )v$. Proposition~\ref{p2.1} yields that $k=i$, i.e., $(a_i,b_i )\in \lambda B_j\subseteq \lambda B$ for all $a_i\in v (b_i )v$.

(B2): Let $(a_i,b_j)\notin B$. Then  $a_i\neq0_0$ and $b_j\neq\infty_\infty$ (see (\ref{Bj}) and (\ref{B})).
We consider three cases:\\
Case I: $b_j=0_0$. In this case $a_i\neq 0_0$, since $(0_0,0_0)\in B$. Choose $\mu=\tilde{\bar{\infty}}$. Then  $\mu(a_j)=+\infty$ and then $\mu(a_i)> \mu(b_j) +1$. Also
$\mu(c_m)\le \mu(d_n) +1$ for all $(c_n,d_m)\in B$ by (\ref{Bj}), (\ref{B}) and  Lemma~\ref{lp}, $(i)$.\\
Case II:
$i=j$.
Then $i\neq 0,\infty$ since  $(0_0,0_0),(\infty_\infty,\infty_\infty)\in B$.
By the hypothesis,  $(a_i,b_i)\notin B_i$ and so $a_i\npreceq b_i+\frac{w}{i}$. Then $a>b+w$. By setting $\mu=\tilde{(\frac{1}{w})}_i$, we have $\mu(a_i)> \mu(b_i) +1$ and $\mu(c_m)\le \mu(d_n) +1$ for all $(c_n,d_m)\in B$, by (\ref{Bj}), (\ref{B}) and  Lemma~\ref{lp}, $(ii), (iii)$.\\
Case III: $i\neq j$,  $i,j\neq 0$ and $j\neq\infty$ (note that  $(0_0,b_j), (a_i,\infty_\infty)\in B  $). Choose
$\mu =\tilde{(\frac{1}{w})}_j$. We have $\mu(a_i)> \mu(b_j) +1$ (in fact $\mu(a_i)=\infty$ and $ \mu(b_j)=\frac{b}{w}<\infty$). If  $(c_m,d_n)\in B_j$, $\mu(c_m)\le \mu(d_n) +1$ by (\ref{Bj}), (\ref{B}) and  Lemma~\ref{lp}, $(ii), (iii)$.

Let $u\in \cv$ be arbitrary.  We show that $\tilde{u}\nsubseteq B$. Suppose $j\in \mathbb{N}$ such that $w<ju$, and $a,b\in \mathbb{R}_+$ such that  $w<a-b<ju$. Then
$a_j\preceq b_j +u$ ($a\le b+ ju$) and $a_j\npreceq b_j +\frac{w}{j}$ ($w+b<a$). Hence $(a_j,b_j)\in \tilde{u}$ and $(a_j,b_j)\notin B$.   This shows that $\p$ is not upper-barreled.
\end{proof}





\end{document}